      \newcommand\Cr{\mathrm{cr}}
  \newcommand\fix{\mathrm{fix}}
\newcommand\id{\mathrm{id}}
  \newcommand\PGL{\mathrm{PGL}}
       \newcommand\Sy{\mathrm{S}}
\newtheorem{theorem}{Theorem}[section]
\newtheorem{lemma}[theorem]{Lemma}
\theoremstyle{definition}
\begin{document}

\title{The covering radius of $\PGL_2(q)$}

\author[Xia]{Binzhou Xia}
\address{School of Mathematics and Statistics\\University of Western Australia\\ Crawley 6009, WA\\ Australia}
\email{binzhou.xia@uwa.edu.au}

\maketitle

\begin{abstract}
The covering radius of a subset $C$ of the symmetric group $\Sy_n$ is the maximal Hamming distance of an element of $\Sy_n$ from $C$. This note determines the covering radii of the finite $2$-dimensional projective general linear groups. It turns out that the covering radius of $\PGL_2(q)$ is $q-2$ if $q$ is even, and is $q-3$ if $q$ is odd.

\textit{Key words:} covering radius; projective general linear groups
\end{abstract}

\section{Introduction}

Let $n\geqslant2$. Define the \emph{Hamming distance} $d$ on the symmetric group $\Sy_n$ by letting
\[
d(g,h)=n-|\fix(gh^{-1})|
\]
for any $g,h\in\Sy_n$, where $\fix$ denotes the set of fixed points. Note that
\[
|\fix(gh^{-1})|=|\fix(hg^{-1})|=|\fix(g^{-1}h)|=|\fix(h^{-1}g)|.
\]
With Hamming distance, $\Sy_n$ is a metric space. Then the distance $d(v,C)$ of a point $v$ from a subset $C$ in $\Sy_n$ is $\min\{d(v,c)\mid c\in C\}$, and the \emph{covering radius} of $C$ is
\[
\Cr(C)=\max\{d(v,C)\mid v\in\Sy_n\}.
\]
Covering radii of subgroups of $\Sy_n$ were studied by Cameron and Wanless in~\cite{CW2005}, among other things, where particular interest was in the subgroup $\PGL_2(q)$ of $\Sy_{q+1}$ with prime power $q$. They proved:

\begin{theorem}\label{thm1}
\emph{(\cite[Theorem~22]{CW2005})} If $q\not\equiv1\pmod{6}$, then
\begin{equation}\label{eq2}
\Cr(\PGL_2(q))=
\begin{cases}
q-2,\quad\text{if $q$ is even,}\\
q-3,\quad\text{if $q$ is odd}.
\end{cases}
\end{equation}
If $q\equiv1\pmod{6}$, then $q-5\leqslant \Cr(\PGL_2(q))\leqslant q-3$.
\end{theorem}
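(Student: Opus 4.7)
The upper bound $\Cr(\PGL_2(q))\le q-2$, valid for all $q$, is immediate from the sharp $3$-transitivity of $\PGL_2(q)$ on $\PG_1(q)$: for any $v\in\Sy_{q+1}$ and any ordered triple $(x_1,x_2,x_3)$ of distinct points, the unique $c\in\PGL_2(q)$ with $c(x_i)=v(x_i)$ gives $|\fix(vc^{-1})|\ge 3$, whence $d(v,c)\le q-2$.

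For $q$ odd I would strengthen this to $\Cr(\PGL_2(q))\le q-3$ via a counting argument using the cross-ratio. Since $\PGL_2(q)$ is precisely the group of cross-ratio preserving permutations of $\PG_1(q)$ and acts sharply $3$-transitively, there exists $c\in\PGL_2(q)$ with $c(x_i)=v(x_i)$ for $i=1,2,3,4$ if and only if $[x_1,x_2,x_3,x_4]=[v(x_1),v(x_2),v(x_3),v(x_4)]$. Writing $k_c:=|\fix(vc^{-1})|$ and $M_v$ for the number of ordered $4$-tuples of distinct points on which $v$ preserves the cross-ratio, double counting yields
\[
\sum_{c\in\PGL_2(q)} k_c(k_c-1)(k_c-2)(k_c-3) \;=\; M_v.
\]
Together with the sharp-$3$-transitivity identity $\sum_c k_c(k_c-1)(k_c-2)=(q+1)q(q-1)$ and the fact that each value of the cross-ratio in $\GF(q)\setminus\{0,1\}$ is attained by exactly $(q+1)q(q-1)$ ordered $4$-tuples, a divisibility argument for $q$ odd forces $M_v>0$, so some $k_c\ge 4$ and $d(v,\PGL_2(q))\le q-3$.

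For the matching lower bounds I would exhibit explicit witnesses $v\in\Sy_{q+1}$. For $q$ even, $\PGL_2(q)=\PSL_2(q)\le\Alt_{q+1}$, so choosing $v$ to be an odd permutation of a judiciously selected cycle type fixes the parity of $|\fix(vc^{-1})|$; a direct case analysis of the fixed-point sets of involutions in $\PGL_2(q)$ then bounds $|\fix(vc^{-1})|\le 3$, giving $d(v,\PGL_2(q))\ge q-2$. For $q$ odd with $q\not\equiv 1\pmod 6$, the absence of primitive $6$th roots of unity in $\GF(q)$ is exploited to build $v$ with $|\fix(vc^{-1})|\le 4$ for every $c$, yielding $d(v,\PGL_2(q))\ge q-3$.

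The main obstacle is the case $q\equiv 1\pmod 6$: here $\GF(q)$ contains primitive $6$th roots of unity, which create ``equianharmonic'' $4$-tuples whose orbit under the anharmonic $\Sy_3$-action on cross-ratios shrinks from $6$ to $2$. This coincidence wrecks the clean cross-ratio count underlying the lower-bound construction, and is precisely why Cameron and Wanless obtain only the bracket $q-5\le\Cr(\PGL_2(q))\le q-3$ for $q\equiv 1\pmod 6$.
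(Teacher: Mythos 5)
First, a point of order: the paper does not prove this statement --- Theorem~\ref{thm1} is quoted from \cite[Theorem~22]{CW2005} and used as a black box --- so your proposal can only be measured against the known argument, not against anything internal to this note. Your opening step (the bound $\Cr(\PGL_2(q))\leqslant q-2$ from sharp $3$-transitivity) is correct, and both double-counting identities you write down are true. The genuine gap is the step ``a divisibility argument for $q$ odd forces $M_v>0$.'' It does not. Write $|G|=(q+1)q(q-1)$ and let $n_i$ be the number of $c$ with $k_c=i$; if $M_v=0$, i.e.\ every $k_c\leqslant 3$, then your identity for triples together with the analogous (and equally valid) identities $\sum_c k_c=\sum_c k_c(k_c-1)=|G|$ pins down $n_3=|G|/6$, $n_2=0$, $n_1=|G|/2$, $n_0=|G|/3$. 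Since $(q-1)q(q+1)$ is divisible by $6$ for \emph{every} $q$, these are nonnegative integers for every $q$, and the ``each cross-ratio value is hit $(q+1)q(q-1)$ times'' fact only says the matrix $A_{\lambda\mu}=|\{\text{$4$-tuples with }[x]=\lambda,\ [x^v]=\mu\}|$ has constant row and column sums, which is compatible with zero trace. No counting argument of this shape can close the gap, because for even $q$ the covering radius really is $q-2$: permutations with all $k_c\leqslant3$ exist, yet every identity and divisibility fact you invoke holds verbatim for even $q$. The parity of $q$ must enter structurally. One way it genuinely does: fix $a,b\in\Omega$; the elements agreeing with $v$ at both form a coset of the cyclic two-point stabilizer of order $q-1$, and if no element agrees with $v$ in four points, the map sending each of the $q-1$ remaining points $z$ to the unique such element also agreeing at $z$ is a bijection, which translates into a complete mapping of a cyclic group of order $q-1$; for odd $q$ this order is even and no complete mapping exists (the product of all elements is the unique involution). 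This is where oddness is actually consumed.

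The lower-bound half also has a broken step. For even $q$ you claim that taking $v$ odd ``fixes the parity of $|\fix(vc^{-1})|$''; the sign of a permutation of $q+1$ points determines the parity of $q+1$ minus the number of cycles, not the number of fixed points (on $5$ points a transposition, a $4$-cycle, and a $2$-cycle times a $3$-cycle are all odd, with $3$, $1$, $0$ fixed points). The remaining lower-bound claims are gestures rather than arguments. The witnesses actually used are explicit non-fractional-linear maps such as $x\mapsto x^3$ (a permutation of $\mathbb{F}_q$ exactly when $q\not\equiv1\pmod 3$), for which $x^3=x^g$ is a degree-$\leqslant4$ polynomial equation and hence $k_g\leqslant 4$ for every $g$ --- compare the use of $y\mapsto y^3$ on $\Delta$ in Section~2 of this paper, whose entire purpose is to salvage a cubing construction when $q\equiv1\pmod 6$. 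Your closing heuristic about equianharmonic cross-ratios does correctly point at why $q\equiv1\pmod 6$ is the hard case, but it is not a proof of anything.
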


In this note, we resolve the case $q\equiv1\pmod{6}$ in Theorem~\ref{thm1} by proving:

\begin{theorem}\label{thm2}
If $q\equiv1\pmod{6}$, then $\Cr(\PGL_2(q))=q-3$.
\end{theorem}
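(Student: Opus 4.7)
The plan is to exhibit a permutation $v\in\Sy_{q+1}$ of the projective line $\PG_1(q)$ such that $|\fix(vg^{-1})|\leqslant4$ for every $g\in\PGL_2(q)$. Such a $v$ satisfies $d(v,g)\geqslant q-3$ for all $g$, giving $\Cr(\PGL_2(q))\geqslant q-3$, and combined with Theorem~\ref{thm1} this yields the equality.

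The template to imitate is the Cameron--Wanless construction in the case $q\not\equiv1\pmod3$, namely $v(x)=x^3$: this works because $\gcd(3,q-1)=1$ makes cubing a permutation of $\GF(q)$, and the equation $x^3=(ax+b)/(cx+d)$ rearranges to a polynomial of degree at most $4$ in $x$. For $q\equiv1\pmod6$ this choice fails because $3\mid q-1$, but the residual identity $\gcd(3,q+1)=1$ still holds, and cubing is a permutation of the norm-one subgroup
\[
S=\{y\in\GF(q^2)^{\times}:y^{q+1}=1\}
\]
of order $q+1$. I would therefore transport cubing from $S$ to $\PG_1(q)$ by choosing $\zeta\in\GF(q^2)\setminus\GF(q)$ with $\bar\zeta=\zeta^q$ and using the M\"obius map
\[
\phi(x)=\frac{x-\zeta}{x-\bar\zeta},
\]
which one checks restricts to a bijection $\PG_1(q)\to S$, and then setting $v=\phi^{-1}\circ(s\mapsto s^3)\circ\phi$.

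To bound agreements with an arbitrary $g\in\PGL_2(q)$, I would conjugate to $\tilde g=\phi g\phi^{-1}$, a M\"obius transformation of $\PG_1(q^2)$, say $\tilde g(s)=(\alpha s+\beta)/(\gamma s+\delta)$ with $\alpha\delta-\beta\gamma\neq0$. Then $v(x)=g(x)$ is equivalent to $s^3=\tilde g(s)$ with $s=\phi(x)\in S$, which rearranges to
\[
\gamma s^4+\delta s^3-\alpha s-\beta=0.
\]
This polynomial is nonzero (else all its coefficients vanish, contradicting the determinant condition) and has degree at most $4$, so at most $4$ roots in $\GF(q^2)$, and \textit{a fortiori} at most $4$ roots in $S$.

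The hardest part, though short, is verifying the two ingredients above cleanly: first, that $\phi$ really is a bijection $\PG_1(q)\to S$ (confirming $\phi(x)^{q+1}=1$ for $x\in\GF(q)$ via $x^q=x$ and $\bar\zeta^q=\zeta$, setting $\phi(\infty)=1$, and using $\zeta\neq\bar\zeta$ for injectivity); and second, that no degeneracy spoils the polynomial bound (if $\gamma=0$ then $\alpha\delta\neq0$ forces the polynomial still to have degree $3$, and a zero of $\gamma s+\delta$ cannot simultaneously satisfy $\alpha s+\beta=0$ without violating the determinant condition). The key conceptual point is that the identity $\gcd(3,q-1)=1$ used in~\cite{CW2005} is replaced by $\gcd(3,q+1)=1$ via the non-split torus of $\PGL_2(q)$, which is precisely what is available in the residual case $q\equiv1\pmod6$.
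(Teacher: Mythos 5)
Your proposal is correct and is essentially the paper's own argument: the paper likewise conjugates the cubing map by a M\"obius bijection from $\Omega=\mathbb{F}_q\cup\{\infty\}$ onto a norm-level set in $\mathbb{F}_{q^2}$ (it uses the coset $\{y\in\mathbb{F}_{q^2}:y^{q+1}=-1\}$ where you use the norm-one subgroup $S$) and then bounds the agreements with any $g\in\PGL_2(q)$ by the at most four roots of $\gamma s^4+\delta s^3-\alpha s-\beta$. The choice of level set is immaterial, since $q\equiv1\pmod 3$ makes cubing injective on either.
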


Combining Theorems~\ref{thm1} and~\ref{thm2} one sees that indeed~\eqref{eq2} holds for all prime power $q$. This completely determines the covering radii of finite $2$-dimensional projective general linear groups.

\section{Proof of Theorem~\ref{thm2}}

Let $q$ be a prime power such that $q\equiv1\pmod{6}$, let $G=\PGL_2(q)$ acting on $\Omega:=\mathbb{F}_q\cup\{\infty\}$, and let $\Delta=\{y\in\mathbb{F}_{q^2}\mid y^{q+1}=-1\}$. As $q$ is odd, $|\mathbb{F}_{q^2}^\times|=q^2-1$ is divisible by $2(q+1)$. Take $\rho$ to be an element of $\mathbb{F}_{q^2}^\times$ of order $2(q+1)$.

\begin{lemma}\label{lem1}
$\rho^{q+1}=-1$ and $\rho\notin\mathbb{F}_q$.
\end{lemma}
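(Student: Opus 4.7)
Both claims are immediate consequences of the defining order of $\rho$, so the plan is simply to extract them from elementary facts about cyclic groups.

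For the first claim, I would start from $\rho^{2(q+1)}=1$, which gives $(\rho^{q+1})^2=1$, hence $\rho^{q+1}\in\{1,-1\}$ (recall $q$ is odd, so $1\neq-1$ in $\mathbb{F}_{q^2}$). If $\rho^{q+1}=1$ then the order of $\rho$ would divide $q+1$, contradicting the hypothesis that it equals $2(q+1)$. Therefore $\rho^{q+1}=-1$.

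For the second claim, the key fact is that $\mathbb{F}_q^\times$ is cyclic of order $q-1$, so every nonzero element of $\mathbb{F}_q$ has multiplicative order dividing $q-1$. Since $\rho\in\mathbb{F}_{q^2}^\times$ is nonzero, if it lay in $\mathbb{F}_q$ its order $2(q+1)$ would have to divide $q-1$; but $2(q+1)>q-1$ for all $q\geqslant1$, a contradiction. Thus $\rho\notin\mathbb{F}_q$.

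There is no real obstacle here: the lemma is essentially a sanity check on the choice of $\rho$, and the proof is a two-line order argument. The only mild subtlety is to remember that $q$ odd ensures $1\neq-1$, so that the square-root step actually determines the sign.
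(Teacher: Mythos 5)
Your proposal is correct and follows essentially the same argument as the paper: the first claim via $(\rho^{q+1})^2=1$ together with the exact order of $\rho$ ruling out $\rho^{q+1}=1$, and the second by comparing the order $2(q+1)$ with $|\mathbb{F}_q^\times|=q-1$. The only cosmetic difference is that you phrase the second step via "order divides $q-1$" while the paper simply notes the order exceeds the group size; both are the same observation.
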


\begin{proof}
Since $\rho$ has order $2(q+1)$, we have $\rho^{2(q+1)}=1$ and $\rho^{q+1}\neq1$. This gives that $(\rho^{q+1}+1)(\rho^{q+1}-1)=0$ while $\rho^{q+1}-1\neq0$. Thus, $\rho^{q+1}+1=0$, i.e. $\rho^{q+1}=-1$. Moreover, as $\rho$ has order $2(q+1)>q-1=|\mathbb{F}_q^\times|$ we know that $\rho\notin\mathbb{F}_q$.
\end{proof}

For any $x\in\mathbb{F}_q$ and $y\in\Delta\setminus\{-1/\rho\}$, let
\[
x^\sigma=\frac{x+\rho}{1-\rho x}\quad\text{and}\quad y^\tau=\frac{y-\rho}{1+\rho y}.
\]
Note that $1-\rho x\neq0$ for any $x\in\mathbb{F}_q$ since $\rho\notin\mathbb{F}_q$ by Lemma~\ref{lem1}. As in the usual convention, let
\begin{equation}\label{eq1}
\infty^\sigma=-\frac{1}{\rho}\quad\text{and}\quad\left(-\frac{1}{\rho}\right)^\tau=\infty.
\end{equation}

\begin{lemma}\label{lem2}
$\sigma$ is a map from $\Omega$ to $\Delta$ and $\tau$ is a map from $\Delta$ to $\Omega$ such that $\sigma\tau=\id_\Omega$ and $\tau\sigma=\id_\Delta$.
\end{lemma}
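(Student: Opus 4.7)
The plan is to verify the four required properties by direct computation, using the key identity $\rho^q = -1/\rho$ (which follows immediately from Lemma~\ref{lem1} since $\rho^{q+1}=-1$ and $\rho\neq 0$).

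First I would check that $\sigma$ lands in $\Delta$. For $x\in\mathbb{F}_q$, apply the Frobenius $y\mapsto y^q$ to $x^\sigma=(x+\rho)/(1-\rho x)$, using $x^q=x$ and $\rho^q=-1/\rho$; after clearing denominators one obtains $(x^\sigma)^q=-(1-\rho x)/(x+\rho)$, so $(x^\sigma)^{q+1}=-1$. For the convention $\infty^\sigma=-1/\rho$, since $q+1$ is even and $\rho^{q+1}=-1$, one has $(-1/\rho)^{q+1}=1/(-1)=-1$, so $-1/\rho\in\Delta$ as well.

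Second, I would check that $\tau$ lands in $\Omega=\mathbb{F}_q\cup\{\infty\}$. For $y\in\Delta$ with $y\neq -1/\rho$, one has $y\neq 0$ and $y^q=-1/y$, so applying Frobenius to $y^\tau=(y-\rho)/(1+\rho y)$ and simplifying (multiplying numerator and denominator by $\rho y$) gives $(y^\tau)^q=y^\tau$; hence $y^\tau\in\mathbb{F}_q$. The denominator $1+\rho y$ is nonzero precisely because $y\neq -1/\rho$, and the convention $(-1/\rho)^\tau=\infty$ handles the remaining point.

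Third, I would verify the inverse relations $\sigma\tau=\id_\Omega$ and $\tau\sigma=\id_\Delta$. A direct substitution gives
\[
(x^\sigma)^\tau=\frac{(x+\rho)-\rho(1-\rho x)}{(1-\rho x)+\rho(x+\rho)}=\frac{x(1+\rho^2)}{1+\rho^2},
\]
and an entirely symmetric computation yields $(y^\tau)^\sigma=y(1+\rho^2)/(1+\rho^2)$. These calculations are valid provided $1+\rho^2\neq 0$; if $\rho^2=-1$ then $\rho$ has order dividing $4$, contradicting $|\rho|=2(q+1)\geqslant 4$ with equality only when $q=1$. The infinite points are dealt with by~\eqref{eq1}: $\infty^{\sigma\tau}=(-1/\rho)^\tau=\infty$ and $(-1/\rho)^{\tau\sigma}=\infty^\sigma=-1/\rho$.

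The computations are all routine once the identity $\rho^q=-1/\rho$ is in hand; the only subtle point is confirming that $x^\sigma\neq -1/\rho$ for $x\in\mathbb{F}_q$ (so that $(x^\sigma)^\tau$ is given by the generic formula rather than the convention), which again reduces to $1+\rho^2\neq 0$. I expect this to be the only place where one must pause to invoke the order of $\rho$; everything else is bookkeeping.
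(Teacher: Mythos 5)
Your proof is correct and follows essentially the same approach as the paper: the same Frobenius computations (via $\rho^q=-1/\rho$ and $y^q=-1/y$) show that $\sigma$ maps $\Omega$ into $\Delta$ and $\tau$ maps $\Delta$ into $\Omega$, with the points $\infty$ and $-1/\rho$ handled by the convention~\eqref{eq1}. The only divergence is in verifying $\sigma\tau=\id_\Omega$ and $\tau\sigma=\id_\Delta$, where you compose the two fractional expressions directly and cancel the factor $1+\rho^2$ (correctly justifying $1+\rho^2\neq0$ from the order of $\rho$), whereas the paper instead notes that $y=(x+\rho)/(1-\rho x)$ holds if and only if $x=(y-\rho)/(1+\rho y)$; both routes are valid, and your explicit check that $x^\sigma\neq-1/\rho$ for $x\in\mathbb{F}_q$ addresses a small point the paper leaves implicit.
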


\begin{proof}
According to Lemma~\ref{lem1}, $\rho^{q+1}=-1$. Then for any $x\in\mathbb{F}_q$, since $x^q=x$, we have
\begin{eqnarray*}
\left(x^\sigma\right)^{q+1}&=&\left(\frac{x+\rho}{1-\rho x}\right)^{q+1}\\
&=&\left(\frac{x+\rho}{1-\rho x}\right)^q\frac{x+\rho}{1-\rho x}\\
&=&\frac{x+\rho^q}{1-\rho^q x}\cdot\frac{x+\rho}{1-\rho x}\\
&=&\frac{\rho x+\rho^{q+1}}{\rho-\rho^{q+1}x}\cdot\frac{x+\rho}{1-\rho x}\\
&=&\frac{\rho x-1}{\rho+x}\cdot\frac{x+\rho}{1-\rho x}=-1.
\end{eqnarray*}
Also, $(\infty^\sigma)^{q+1}=(-1/\rho)^{q+1}=1/\rho^{q+1}=-1$. This shows that $\sigma$ is a map from $\Omega$ to $\Delta$. For any $y\in\Delta\setminus\{-\rho^{-1}\}$, in view of $y^{q+1}=\rho^{q+1}=-1$ we deduce that
\[
\left(y^\tau\right)^q=\left(\frac{y-\rho}{1+\rho y}\right)^q=\frac{y^q-\rho^q}{1+\rho^q y^q}
=\frac{\rho y^{q+1}-\rho^{q+1}y}{\rho y+\rho^{q+1}y^{q+1}}=\frac{-\rho+y}{\rho y+1}=y^\tau
\]
and hence $y^\tau\in\mathbb{F}_q$. Then as $(-1/\rho)^\tau=\infty$, we see that $\tau$ is a map from $\Delta$ to $\Omega$. Finally, for any $x\in\mathbb{F}_q$ and $y\in\Delta\setminus\{-1/\rho\}$,
\[
y=\frac{x+\rho}{1-\rho x}\quad\text{if and only if}\quad x=\frac{y-\rho}{1+\rho y}.
\]
This in conjunction with~\eqref{eq1} implies that for any $x\in\Omega$ and $y\in\Delta$,
\[
y=x^\sigma\quad\text{if and only if}\quad x=y^\tau.
\]
As a consequence, we obtain $\sigma\tau=\id_\Omega$ and $\tau\sigma=\id_\Delta$.
\end{proof}

For any $y\in\Delta$, let $y^h=y^3$.

\begin{lemma}\label{lem3}
$h$ is a permutation on $\Delta$.
\end{lemma}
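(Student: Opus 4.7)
The plan is to verify two things: first, that $h$ actually maps $\Delta$ into $\Delta$ (so it is well-defined as a self-map), and second, that it is injective on the finite set $\Delta$, which will automatically make it bijective.

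For the first point, I would note that for any $y\in\Delta$ we have $y^{q+1}=-1$, and therefore
\[
(y^h)^{q+1}=(y^3)^{q+1}=(y^{q+1})^3=(-1)^3=-1,
\]
so $y^h\in\Delta$ as required. This step is essentially a one-line computation.

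For injectivity, suppose $y_1,y_2\in\Delta$ satisfy $y_1^3=y_2^3$. Setting $z=y_1y_2^{-1}$ (noting $y_2\neq 0$ since $y_2^{q+1}=-1$), we obtain $z^3=1$. On the other hand $z^{q+1}=y_1^{q+1}y_2^{-(q+1)}=(-1)(-1)^{-1}=1$. Hence the order of $z$ divides $\gcd(3,q+1)$. The hypothesis $q\equiv 1\pmod{6}$ gives $q+1\equiv 2\pmod{3}$, so $\gcd(3,q+1)=1$, forcing $z=1$ and thus $y_1=y_2$.

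The only step that requires any thought at all is the coprimality $\gcd(3,q+1)=1$, which is exactly where the congruence condition $q\equiv 1\pmod 6$ enters (and indeed is presumably the reason this case was separated out in Theorem~\ref{thm1}). Since $\Delta$ is finite, the injection $h\colon\Delta\to\Delta$ is automatically a bijection, completing the proof. There is no real obstacle here; the lemma is a routine consequence of the arithmetic of $(q+1)$-th roots of $-1$ in $\mathbb{F}_{q^2}^\times$.
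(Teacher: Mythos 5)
Your proof is correct and follows essentially the same route as the paper: the same one-line verification that $(y^3)^{q+1}=-1$, and an injectivity argument resting on the same arithmetic fact $q\equiv1\pmod 3$. The only cosmetic difference is that the paper recovers $y$ explicitly from $y^3$ via $y=(y^3)^{(q+2)/3}/(-1)$, whereas you phrase it as the order of $y_1y_2^{-1}$ dividing $\gcd(3,q+1)=1$; both are equally valid.
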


\begin{proof}
Clearly, $y^3\in\Delta$ for any $y\in\Delta$. Hence $h$ is a map from $\Delta$ to $\Delta$. If $y_1\in\Delta$ and $y_2\in\Delta$ such that $y_1^3=y_2^3$, then as $q\equiv1\pmod{3}$,
\[
y_1=\frac{y_1^{q+2}}{y_1^{q+1}}=\frac{(y_1^3)^{(q+2)/3}}{-1}=\frac{(y_2^3)^{(q+2)/3}}{-1}=\frac{y_2^{q+2}}{y_2^{q+1}}=y_2.
\]
Consequently, $h$ is a permutation on $\Delta$.
\end{proof}

From Lemmas~\ref{lem2} and~\ref{lem3} we deduce that $\sigma h\tau$ is a permutation on $\Omega$. In the following we prove $d(\sigma h\tau,G)\geqslant q-3$. Let $g$ be an arbitrary element of $G$. Then $g$ is a linear fractional transformation, and so is $\tau g\sigma$. Accordingly, the equation $y^3=y^{\tau g\sigma}$ on $y$ has at most $4$ solutions over $\mathbb{F}_{q^2}$. In particular,
\[
|\{y\in\Delta\mid y^h=y^{\tau g\sigma}\}|\leqslant4
\]
as $y^h=y^3$. Then by Lemma~\ref{lem2}, it follows that
\begin{eqnarray*}
|\{x\in\Omega\mid x^{\sigma h\tau}=x^g\}|&=&|\{x\in\Omega\mid x^{\sigma h\tau}=x^{\sigma\tau g}\}|\\
&=&|\{x\in\Omega\mid x^{\sigma h\tau}=x^{\sigma\tau g}\}^\sigma|\\
&=&|\{y\in\Delta\mid y^{h\tau}=y^{\tau g}\}|\\
&=&|\{y\in\Delta\mid y^{h\tau\sigma}=y^{\tau g\sigma}\}|\\
&=&|\{y\in\Delta\mid y^h=y^{\tau g\sigma}\}|\leqslant4,
\end{eqnarray*}
which yields
\begin{eqnarray*}
d(\sigma h\tau,g)&=&|\Omega|-|\fix(\sigma h\tau g^{-1})|\\
&=&|\Omega|-|\{x\in\Omega\mid x^{\sigma h\tau}=x^g\}|\geqslant|\Omega|-4=q-3.
\end{eqnarray*}
Therefore, $d(\sigma h\tau,G)\geqslant q-3$ as desired.

Now as there exists a permutation on $\Omega$ at distance at least $q-3$ from $G$, we derive that $\Cr(G)\geqslant q-3$. This together with the inequality $\Cr(G)\leqslant q-3$ given in Theorem~\ref{thm1} leads to $\Cr(G)=q-3$, completing the proof of Theorem~\ref{thm2}.

\vskip0.1in
\noindent\textsc{Acknowledgements.} The author was supported by Australian Research Council grant DP150101066. This note is in response to a problem posed by Peter Cameron and Ian Wanless during their visit to the University of Western Australia. The author is very grateful to them for bringing this problem into his attention.

\end{document}